\def\ds@whichfont{dsrom}
\DeclareMathAlphabet{\mathds}{U}{\ds@whichfont}{m}{n}
\newtheorem{theorem}{Theorem}[section]
\newtheorem{lemma}[theorem]{Lemma}
\newtheorem{proposition}[theorem]{Proposition}
\theoremstyle{definition}
\newtheorem{assumption}[theorem]{Assumption}
\newtheorem{remark}[theorem]{Remark}
\newtheorem*{acknowledgment}{Acknowledgment}
\numberwithin{equation}{section}
\theoremstyle{plain}
\numberwithin{equation}{section} 
\numberwithin{figure}{section} 
\theoremstyle{plain}
\theoremstyle{plain}
\theoremstyle{remark}
\newtheorem*{acknowledgement*}{Acknowledgement}
\newcommand{\cA}{{\mathcal A}}
\newcommand{\cB}{{\mathcal B}}
\newcommand{\cD}{{\mathcal D}}
\newcommand{\cF}{{\mathcal F}}
\newcommand{\cX}{{\mathcal X}}
\newcommand{\cY}{{\mathcal Y}}
\newcommand{\te}{{\theta}}
\newcommand{\Om}{{\Omega}}
\newcommand{\om}{{\omega}}
\newcommand{\ve}{{\varepsilon}}
\newcommand{\del}{{\delta}}
\newcommand{\Gam}{{\Gamma}}
\newcommand{\sig}{{\sigma}}
\newcommand{\al}{{\alpha}}
\newcommand{\bbE}{{\mathbb E}}
\newcommand{\bbN}{{\mathbb N}}
\newcommand{\bbP}{{\mathbb P}}
\newcommand{\bbR}{{\mathbb R}}
\begin{document}
\title[]{An almost sure invariance principle for some classes of non-stationary mixing sequences}
 \author{Yeor Hafouta \\
\vskip 0.1cm
Department of Mathematics\\
The Ohio State University}
\email{yeor.hafouta@mail.huji.ac.il, hafuta.1@osu.edu}

\maketitle
\markboth{Y. Hafouta}{Almost sure invariance principle}
\renewcommand{\theequation}{\arabic{section}.\arabic{equation}}
\pagenumbering{arabic}

\begin{abstract}
In this note we (in particular) prove an almost sure invariance principle (ASIP) for non-stationary and uniformly bounded sequences of random variables which are exponentially fast  $\phi$-mixing. The obtained rate is of order $o(V_n^{\frac14+\del})$ for an arbitrary $\del>0$, where $V_n$ is the variance  
of the underlying partial sums $S_n$. For certain classes of inhomogeneous Markov chains we also prove  a vector-valued ASIP with similar rates.
\end{abstract}

\section{Introduction}\label{Intro}
The central limit theorem (CLT) for partial sums $S_n=\sum_{j=1}^{n}X_j$ of stationary real-valued random variables $\{X_j\}$, exhibiting some type of ``weak dependence", is one of the main topics in probability theory, stating that $(S_n-\bbE[S_n])/\sqrt{V_n},\, V_n=\text{Var}(S_n)$ converges in distribution towards a standard normal random variable.
The almost sure invariance principle (ASIP) is a stronger result stating that there is a coupling between $\{X_j\}$ and  a standard Brownian motion $(W_t)_{t\ge 0}$ such that 
\[
\left|S_n-\bbE[S_n]-W_{V_n}\right|=o(V_n^\frac12),\,\,\text{almost surely}
\]
where $W_{V_n}$ is the value of the Brownian motion at time $t=V_n$.
Both the CLT and the ASIP have corresponding versions for vector-valued sequences.
The ASIP yields, for instance, the functional central limit theorem and the law of iterated logarithm (see \cite{PS}).
While such results are well established for stationary sequences (see, for instance, \cite{PS}, \cite{BP}, \cite{Shao}, \cite{Rio}, \cite{PelASIP} and \cite{GO} and references therein), in the non-stationary case much less is known, especially when the variance (or the covariance matrix) of $S_n$ grows sub-linearly fast in $n$. For instance, in \cite{WZ} a vector-valued ASIP was obtained under conditions guaranteeing that the covariance matrix grows linearly fast. Similar results were obtained for random dynamical systems in \cite{DFGTV1} and \cite{DH}, and the ASIP for elliptic Markov chains in random dynamical environment can be obtained similarly. For these models the variance  (or the covariance matrix) of the underlying partial sums $S_n$ grows linearly fast in $n$ as well, while in \cite{Hyd} a real-valued ASIP was obtained for time-dependent hyperbolic dynamical systems under the assumption that $\text{Var}(S_n)$ grows faster than $n^{\frac12}$.

In this paper we prove the ASIP for non-stationary, uniformly bounded, real or vector valued  exponentially fast $\al$-mixing sequences of random variables\footnote{We will also assume that $\lim_{n\to\infty}\phi(n)<\frac12$, were $\phi(\cdot)$ are the, so-called, $\phi$-mixing coefficients, so the result holds true when $\phi(n)$ decays exponentially fast.}.
Under a certain assumption, which always holds true for real-valued sequences,
we obtain the ASIP with rate $o(s_n^{\frac14+\del})$ for an arbitrary $\del>0$, where in the real-valued case $s_n=V_n=\text{Var}(S_n)$, while in the vector-valued case\footnote{Where $|u|$ is the standard Euclidean norm of a vector and $u\cdot v$ denotes the standard scalar product of two vectors, regardless of the underlying dimension.} $s_n=\min_{|u|=1}(\text{Cov}(S_n)u\cdot u)$. Then, in the vector-valued case, we will show that this assumption holds true for several classes of inhomogeneous contracting Markov chains. 

The proof of the results relies on a recent modification of \cite[Theorem 1.3]{GO}, together with a block-partition argument, which in some sense reduces the problem to the case when the variance or the covariance matrix of $S_n$ grows linearly fast in $n$. More precisely, we show that there are ``intervals"   $I_{j}=\{a_j,a_{j}+1,...,b_j\}$  in the positive integers  so that  $a_1=1$ and $b_j+1=a_j$ (i.e. $\bbN=\cup_{j}I_j$) and the variance (covariance matrix) of each partial sum of the form $\sum_{j=1}^{k}\Xi_{j}$, $\Xi_j=\sum_{s\in I_j}X_s$ grows linearly fast in $k$. In this paper the sets $I_j$ will be referred to as ``blocks". Once the blocks $I_j$ are constructed  the proof of the ASIP for $S_n$ has two steps: first,  we prove the ASIP for the sequence $\tilde S_k=\sum_{j=1}^{k}\Xi_{j}$ using the modification of \cite[Theorem 1.3]{GO} and then we approximate $S_n$ by $\tilde S_{k_n}$, where $k_n$ is the largest index so that $I_{k_n}\subset\{1,2,...,n\}$, and show that $k_n\asymp s_n=\min_{|u|=1}(\text{Cov}(S_n)u\cdot u)$.

\section{Preliminaries and main results}

Let $X_1,X_2,...$ be a sequence of zero-mean uniformly bounded $d$-dimensional random vectors defined on a  probability space $(\Om,\cF,\bbP)$. For each $j\in\bbN$, let $\cF_j$ denote the $\sig$-algebra generated by $X_1,...,X_j$ and let $\cF_{j,\infty}$  denote the $\sig$-algebra generated by $X_k$ for $k\geq j$.
Recall  that the $\al$ and $\phi$ mixing coefficients of the sequence are given by 
\begin{equation}\label{al def}
\al(k)=\sup\left\{\left|\bbP(A\cap B)-\bbP(A)\bbP(B)\right|: A\in\cF_j,\, B\in\cF_{j+k,\infty},\, j\in\bbN\right\}
\end{equation} 
and 
\begin{equation}\label{phi def}
\phi(k)=\sup\left\{\left|\bbP(B|A)-\bbP(B)\right|: A\in\cF_j,\, B\in\cF_{j+k,\infty},\, j\in\bbN,\,\,\bbP(A)>0\right\}.
\end{equation} 
Then both $\al(\cdot)$ and $\phi(\cdot)$ measure the long range dependence of the sequence $\{X_j\}$ in the sense that $X_j$'s are independent if and only if both sequences $\al(\cdot)$ and $\phi(\cdot)$ are identically $0$.

We will assume here that there are constants $C>0$, $\del\in(0,1)$ and $n_0\in\bbN$ so that 
\begin{equation}\label{al mix}
\al(n)\leq 	C\del^n,\,\, \text{ for all }n\in\bbN
\end{equation}
and 
\begin{equation}\label{phi half}
\phi(n_0)<\frac12.
\end{equation}
These are the mixing (weak-dependence) assumptions discussed in Section \ref{Intro}.
\begin{remark}
It is clear from the definitions of $\al(k)$ and $\phi(k)$ that $\al(k)\leq \phi(k)$. Hence, both conditions \eqref{al mix} and \eqref{phi half} are in force when $\phi(n)\leq C\del^n$ for some $C>0$ and $\del\in(0,1)$.  Note also that for Markov chains, condition \eqref{phi half} already implies that  $\phi(n)$ decays exponentially fast to $0$, and so in this case \eqref{phi half}  implies \eqref{al mix}. In any case, all the result in this paper are new even when $\phi(n)$ decays exponentially fast\footnote{In fact, this was the main mixing assumption in a previous version of this paper https://arxiv.org/abs/2005.02915v3}.
\end{remark}

Next, for each $n\in\mathbb N$ set 
\[
S_n=\sum_{k=1}^n X_k
\]
and put $V_n=\text{Cov}(S_n)$ (which is a $d\times d$ matrix). 
For all $n,m\in\bbN$ so that $n\leq m$   set 
$$
S_{n,m}=\sum_{j=n}^{m}X_j,\,\,V_{n,m}=\text{Cov}(S_{n,m}),\,s_n=\min_{|u|=1}(V_n u\cdot u)
$$
where  $|u|$  denotes the Euclidean norm of a vector $u\in\bbR^d$  and
$u\cdot v$ denotes the standard scalar product of two vectors $u,v\in \bbR^d$. 
Then in the scalar case $d=1$ we have $s_n=V_n=\text{Var}(S_n)$.

Next,  for a random variable $Z:\Om\to\bbR^d$ and a number $p\in[1,\infty)$ let us denote $\|Z\|_{L^p}=\left(\int|Z(\om)|^p d\bbP(\om)\right)^{1/p}$.
We consider here the following condition.

\begin{assumption}\label{AssVV}
There are constants $C_1,C_2\geq1$ with the following property:  for every  pair of positive integers $n$ and $m$ so that $n\leq m$ and $\|S_{n,m}\|_{L^2}\geq C_1$ we have
$$
\max_{|u|=1}(V_{n,m} u\cdot u)\leq C_2\min_{|u|=1}(V_{n,m} u\cdot u).
$$
\end{assumption}
This assumption trivially holds true for real-valued sequences, and in Section \ref{SecVV} we will verify it for certain classes of additive vector-valued functionals $X_j=f_j(\xi_j)$ of inhomogeneous ``sufficiently contracting" Markov chains $\{\xi_j\}$. Note also that 
$$
V_{n,m} u\cdot u=\text{Var}(S_{n,m}\cdot u)
$$
and so Assumption \ref{AssVV} gives us a certain type of uniform control over these variances\footnote{However, $s_n$ can still grow arbitrarily slow.}.

%
%

Our main result here is the following:
\begin{theorem}\label{Main Thm}
Under Assumption \ref{AssVV} we have the following.
Suppose  that (\ref{al mix}) and \eqref{phi half} hold true and that $\lim_{n\to\infty}s_n=\infty$. 
Then for every $\ve>0$ there is a coupling between $X_1,X_2,...$ and  a sequence of  independent zero-mean Guassian random vectors $Z_1,Z_2,\ldots$ so that
\begin{equation}\label{Rate}
\left|S_n-\sum_{j=1}^{n}Z_j\right|=o(s_n^{1/4+\ve}),\,\,\text{almost surely.}
\end{equation}
 Moreover, there is a constant $C=C_\ve>0$ so that for all $n\geq1$  and a unit vector $u\in\mathbb R^d$,
\begin{equation}\label{Var est1}
\left\|S_n\cdot u\right\|_{L^2}^2-Cs_n^{1/2+\ve}\leq \left\|\sum_{j=1}^n Z_j\cdot u\right\|_{L^2}^2\leq \left\|S_n\cdot u \right\|_{L^2}^2+Cs_n^{1/2+\ve}.
\end{equation}
\end{theorem}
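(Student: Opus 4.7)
The plan is to reduce the theorem to an ASIP for a sequence with linearly growing covariance, for which the modification of \cite[Theorem 1.3]{GO} cited in the introduction applies directly. The reduction is carried out via a block partition of the indices.

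\textbf{Step 1: blocking.} I would construct consecutive blocks $I_k = [n_k, m_k]$ so that the block sums $Y_k := S_{n_k, m_k}$ satisfy $\|Y_k\|_{L^2}^2 \in [A, A']$ for fixed constants $A' > A > C_1$, where $C_1$ is the threshold from Assumption \ref{AssVV}. Such a partition exists because $\|S_{n_k,m}\|_{L^2}$ changes by at most the uniform bound of $\|X_{m+1}\|_\infty$ when $m$ is incremented by $1$, while $s_n \to \infty$ ensures that the threshold $A$ is eventually reached in every new block. By Assumption \ref{AssVV}, each covariance $\text{Cov}(Y_k)$ has all its eigenvalues in some universal interval $[c,C]\subset(0,\infty)$.

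\textbf{Step 2: inheritance.} I would verify that $\{Y_k\}$ inherits the hypotheses at the block scale. Since $Y_k$ is $\sig(X_{n_k},\ldots,X_{m_k})$-measurable, (\ref{al mix}) yields $\al_Y(r) \leq \al_X(n_{k+r} - m_k) \leq C\del^{n_{k+r}-m_k}$, which is exponentially small in $r$ provided block lengths do not grow too fast; the analogue of \eqref{phi half} transfers with an enlarged $n_0$. The diagonal contribution $\sum_k \text{Cov}(Y_k)$ dominates $cK\cdot I$, and the off-diagonal block-to-block cross-covariances decay exponentially (a standard consequence of $\al$-mixing plus uniform boundedness of increments), so $\min_{|u|=1} \text{Cov}\bigl(\sum_{k\le K} Y_k\bigr)u\cdot u \asymp K$. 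In particular $s_{m_K} \asymp K$.

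\textbf{Step 3: block ASIP and distribution to individual indices.} Applying the modification of \cite[Theorem 1.3]{GO} to $\{Y_k\}$ produces independent centered Gaussian vectors $\tilde Z_k$, defined on a suitable extension of the space, with
\[
\Bigl|\sum_{k \le K} Y_k - \sum_{k \le K} \tilde Z_k\Bigr| = o(K^{1/4+\del}) \qquad \text{a.s.,}
\]
together with an $L^2$ variance bound of order $K^{1/2+\del}$ for directional variances. I then form an indexed family $\{Z_j\}$ by setting $Z_{m_k} := \tilde Z_k$ and $Z_j := 0$ for $j \in I_k \setminus \{m_k\}$; these are independent zero-mean Gaussians with $\sum_{j\le m_K} Z_j = \sum_{k\le K} \tilde Z_k$. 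For $n$ lying inside a block $I_K$, the incomplete-block remainder $|S_n - S_{m_{K-1}}|$ is uniformly $L^2$-bounded and hence is almost surely $o(s_n^{1/4+\del})$ by a Borel--Cantelli argument along the block index $K$. Since $K \asymp s_n$, the block rate $o(K^{1/4+\del})$ translates to the required $o(s_n^{1/4+\del})$, and \eqref{Var est1} follows from the block $L^2$ matching plus the $O(1)$ correction per incomplete block.

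\textbf{Main obstacle.} The hardest part is the blocking step itself: when $s_n$ grows slowly, the block lengths $|I_k|$ may grow, and one must verify both that the gaps $n_{k+r} - m_k$ remain large enough for $\al_X$ to decouple the blocks effectively and that Assumption \ref{AssVV} applied to long blocks still delivers a uniform condition number on $\text{Cov}(Y_k)$. Producing the explicit quantitative bound on $|I_k|$ in terms of the local growth rate of $s_n$, and extracting from it the exponential decay of $\al_Y$ together with the required variance comparisons, is the technical heart of the reduction; the remaining steps amount to bookkeeping and direct quotation of \cite{GO}.
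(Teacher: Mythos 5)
Your overall route is the same as the paper's: partition the indices into blocks whose sums have directional variance of constant order, apply the non-stationary version of \cite[Theorem 1.3]{GO} (i.e.\ \cite[Theorem 2.1]{DH}) to the block sums, and transfer back using $K\asymp s_n$ together with control of the incomplete block. However, the step you defer as ``the technical heart'' is exactly the content of the paper's Section \ref{Sec3}, and the shortcut you propose for it fails as stated. With \emph{consecutive} blocks there is no separation between $Y_k$ and $Y_{k+1}$, so the $\alpha$-mixing covariance inequality gives no smallness for adjacent pairs: $|\mathrm{Cov}(Y_k\cdot u,\,Y_{k+1}\cdot u)|$ can a priori be of the same order $A$ as the diagonal terms, and hence ``exponential decay of block-to-block cross-covariances'' does not yield $\min_{|u|=1}\mathrm{Cov}\bigl(\sum_{k\le K}Y_k\bigr)u\cdot u\gtrsim K$, which is precisely the relation $K\asymp s_n$ that your Step 3 needs in order to convert the block rate $o(K^{1/4+\delta})$ into $o(s_n^{1/4+\delta})$. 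The paper resolves this by separating the blocks $M_j$ by gaps of a fixed length $r$ chosen from the mixing rate (Lemma \ref{Lemma 2}), and then showing that re-inserting the gaps changes the variance only by $Q(A)k$ with $Q(A)/A\to0$ (Lemma \ref{Cor1}), so that taking $A$ large gives the two-sided bound \eqref{k n vn.1}; some argument of this type (explicit gaps, or splitting off a bounded prefix of each block and letting $A$ be large) is indispensable and is missing from your proposal.

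A second gap: both the covariance estimates for well-separated blocks and the treatment of the incomplete-block remainder require $L^p$ bounds for block sums and for maxima over a block, and these do not follow from ``uniform boundedness of increments'' or from uniform $L^2$ bounds, because the block lengths are unbounded. In particular, Chebyshev at the $L^2$ level only gives probabilities of order $K^{-2\varepsilon}$, which are not summable for small $\varepsilon$, so your Borel--Cantelli step for $|S_n-S_{m_{K-1}}|$ is not justified as written. The paper obtains the needed inequalities $\|S(M)\|_{L^p}\le A_p(1+\|S(M)\|_{L^2})$ and $\|\mathcal{D}_q\|_{L^p}\le c_p$ from the Rosenthal-type maximal inequality \cite[Theorem 6.17]{PelBook}, and this is exactly where the hypothesis \eqref{phi half} enters; your write-up notes that the $\phi$-condition ``transfers'' but never invokes it at the two places where it is actually needed. (Minor remarks: your worry that growing block lengths might spoil the mixing inheritance is unfounded, since skipping $k$ blocks skips at least $k$ original indices, so the decorrelation condition \eqref{(H)} for the block process follows from \eqref{al mix} regardless of block lengths; and distributing the Gaussians over each block, with zero vectors off the block endpoints, is consistent with how the paper passes from the block coupling to the statement of the theorem.)
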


\begin{remark}
\,

\textit{(i)} In the scalar case $d=1$,  \eqref{Var est1} yields that the difference between the variances is $O(V_n^{\frac 12+\delta})$. Thus, using \eqref{Var est1} together with~\cite[Theorem 3.2 A]{HR}, we conclude that  in the scalar case, for every $\ve>0$ there is a coupling of $\{X_n\}$ with a standard Brownian motion $\{W_t:\,t\geq0\}$ so that
\begin{equation}\label{ZZZ}
\left|\sum_{j=1}^{n}X_j-W_{V_n}\right|=o(V_n^{\frac14+\ve}),\quad\text{a.s.}
\end{equation}
A corresponding result in the vector-valued case seems less plausible because in the non-stationary setup the structure of the covariance matrix $V_n$ does not stabilize as $n\to\infty$, which makes it less likely that we can approximate $S_n$ by a single Gaussian process like a  standard $d$-dimensional Brownian motion.
\vskip0.1cm

\textit{(ii)} For stationary sequences $\{X_n\}$, it was shown in \cite[Theorem 1.4]{Shao} that if $\phi(n)\ll \ln^{-r} n$  and $\bbE[|X_n|^{2+\del}]<\infty$
for some $\del>0$ and $r>(2+\del)/(2+2\del)$, then
there is a coupling  of $\{X_n\}$ with a standard Brownian motion so that the left hand side of (\ref{Rate}) is of order $o(V_n^{1/2}\ln^{-\te} V_n)$
for an arbitrary $0<\te<(r(1+\del))/(2(2+2\del))-\frac14$. In comparison with \cite{Shao}, we get better ASIP rates in the non-stationary case, but only for uniformly bounded exponentially  fast $\al$-mixing sequences such that $\lim_{n\to\infty}\phi(n)<\frac12$.
\vskip0.1cm

\textit{(iii)} We would like to stress that even in the scalar case $d=1$ no growth rates on the variance (such as $V_n\geq n^\ve$) are required in Theorem \ref{Main Thm}. This is in contrast, for instance, with \cite{Hyd} where it was assumed that $V_n\geq n^{\frac12+\del}$, and \cite{GO} and \cite{WZ} where a linear growth was assumed. Note that in the latter papers vector-valued variables were considered.
\vskip0.1cm

\textit{(iv)} Many papers about the ASIP rely on martingale approximation (e.g. \cite{Hyd} and \cite{WZ}). However, to the best of our knowledge, the best rate in the vector-valued case that can be achieved using martingales (in the stationary case) is $o(n^{1/3}(\log n)^{1+\ve})=o\big(s_n^{1/3}(\log s_n)^{1+\ve}\big)$ (see \cite{CDF}), and so an attempt to use existing results for martingales  seems to  yield  weaker rates than the ones obtained in Theorem \ref{Main Thm}.

\end{remark}

\section{A linearization of the growth rate of the covariance matrix}\label{Sec3}
The main step in the proof of Theorem \ref{Main Thm} is to make a certain reduction to the case when $s_n=\min_{|u|=1}(V_n u\cdot u)$ grows linearly fast in $n$. This is the content of the following result.

\begin{proposition}\label{VarPrp}
Suppose that\footnote{Note that this series converges when \eqref{al mix} holds true.} 
$\sum_{m=1}^\infty\left(\al(m)\right)^{1-2/p}<\infty$ for some $p>2$ and  that $\lim_{n\to\infty}s_n=\infty$. Then there  are constants $A_1,A_2>0$ and disjoint sets $I_j=\{a_j,a_j+1,...,b_j\}\subset\bbN$ whose union cover $\bbN$ (so that $a_1=1$ and $a_{j+1}=b_j+1$ for all $j$) and for all $j\in\bbN$ and a unit vector $u$ we have
\begin{equation}\label{A}
A_1\leq \left\|\sum_{k\in I_j}X_k\cdot u \right\|_{L^2}\leq \max_{m\in I_j}\left\|\sum_{k=a_j}^{m}X_k\cdot u \right\|_{L^2}\leq A_2.
\end{equation}
and so
\begin{equation}\label{Block Cntrl}
\sup_{j\in\bbN}\, \max_{m\in I_j}\left\|\sum_{k=a_j}^{m}X_k\right\|_{L^2}\leq A_2.
\end{equation}
Moreover, let $k_n=\max\{k: b_k\leq n\}$ and set $\Xi_j=\sum_{k\in I_j}X_k$. Then the following statement hold true.

(i) There are  constants $R_1,R_2>0$ so that  for every  $n$ large enough and all unit vectors $u$,
\begin{equation}\label{k n vn.1}
R_1k_n\leq \text{Var}(S_n\cdot u)=\text{Cov}(S_n)u\cdot u\leq R_2k_n.
\end{equation}

(ii) If also  \eqref{phi half} is valid, then for  every $\ve>0$ we have
\begin{equation}\label{Last}
\left|S_n-\sum_{j=1}^{k_n}\Xi_j\right|=o(s_n^{\ve}),\,\,\bbP-\text{a.s.}
\end{equation}
\end{proposition}

\subsection*{Proof of  Proposition \ref{VarPrp}}
First, let us fix some unit vector $u_0$, and set $\xi_j=X_j\cdot u_0$. 
For every finite  $M\subset\bbN$ set
\[
S(M)=\sum_{j\in M}X_j\cdot u_0=\sum_{j\in M} \xi_j.
\]

Next, let $A>1$ and $r\in\bbN$ be  sufficiently large constants which are yet to be determined. 
Let us construct a sequence $M_j,\, j\in\bbN$ of intervals (blocks) in the positive integers as follows.
Let $p_1$ be the first index $p$ so that $\|\sum_{j=1}^{p}\xi_j\|_{L^2}\geq \sqrt{A}$ and set $M_1=\{1,2,...,p_1\}$. Next, given that $M_j=\{q_j,q_{j}+1,...,p_j\}$ was constructed we define $q_{j+1}=p_j+r$ and $M_{j+1}=\{q_{j+1}, q_{j+1}+1,...,p_{j+1}\}$, where $p_{j+1}$ is the first index $p\geq q_{j+1}$ so that $\|S(\{q_{j+1},...,p\})\|_{L^2}\geq \sqrt A$.  Then the blocks $M_j=\{q_j,q_j+1,...,p_j\}$ satisfy the following properties:
\begin{enumerate}
\item $M_1$ contains $1$ and  for each $j$ the block $M_j$ is to the left of $M_{j+1}$, and $\min M_{j+1}-\max M_j=r$;
\vskip0.1cm
\item For each $j$ we have 
$\sqrt A\leq \|S(M_j)\|_{L^2}\leq\sqrt A+L,$\,\,\,$L=\sup_n(\text{ess-sup}|X_n|)$ and 
\begin{equation}\label{Mx 2}
\max_{s\in M_j,\, s<p_j} \|S(\{q_j,q_{j}+1,...,s\})\|_{L^2}<\sqrt A\leq \|S(M_j)\|_{L^2}.
\end{equation}

\end{enumerate}
Next,  let us define $I_j=M_j+\{0,1,...,r-1\}$.
Then the block $I_j$ is to the left of $I_{j+1}$ and the union of the $I_j$'s cover $\bbN$. Thus we can write $I_{j}=\{a_j,a_{j}+1,...,b_j\}$ with $a_{j+1}=b_j+1$ and $a_1=1$. 

We will break down the rest of the proof of Proposition \ref{VarPrp} into a few steps. 
Between the steps we will introduce appropriate restrictions on  $r$ and $A$, and the sets $I_j$ corresponding to appropriate choices of $r$ and $A$ will satisfy all the properties described in Proposition \ref{VarPrp}.

The first result we need is the following:
\begin{lemma}\label{L1}
For every $p>2$ there is a constant $C_p\geq 1$ which does not depend on $A$ or $r$ so that for every $1\leq i<j$ we have
\begin{equation}\label{p cov}
\left|\text{Cov}(S(M_i),S(M_j))\right|\leq C_p\|S(M_i)\|_{L^2}\|S(M_j)\|_{L^2}\left(\al(r(j-i))\right)^{1-2/p}.
\end{equation}
\end{lemma}
\begin{proof}
By applying \cite[Corollary A.2]{Hall} we get that
\begin{equation}\label{p cov0}
\left|\text{Cov}(S(M_i),S(M_j))\right|\leq 8\|S(M_i)\|_{L^p}\|S(M_j)\|_{L^p}\left(\al(r(j-i))\right)^{1-2/p}.
\end{equation}
On the other hand, since \eqref{phi half} holds, by applying \cite[Theorem 6.17]{PelBook}, taking into account that $X_j$ are uniformly bounded and using \eqref{Mx 2} we get that 
\begin{equation}\label{M p}
\|S(M_i)\|_{L^p}\leq A_p(1+\|S(M_i)\|_{L^2})
\end{equation}
where $A_p\geq 1$ is a constant that depends only on $p$, $n_0$ from \eqref{phi half} and $\ve=\frac12-\phi(n_0)$. Now the proof is completed by recalling that $\|S(M_i)\|_{L^2}\geq\sqrt A\geq1$ (and so we can take $C_p=32A_p$).
\end{proof}

Next, let $p$ be as in Proposition \ref{VarPrp}.  Since $\sum_{m=1}^\infty\left(\al(m)\right)^{1-2/p}<\infty$ there exists  $r_0\in\bbN$
so that\footnote{Indeed $\sum_{m=1}^{\infty}\left(\al(rm)\right)^{1-2/p}\leq \sum_{m=r}^\infty\left(\al(m)\right)^{1-2/p}\to 0\text{ as }r\to\infty$.} 
\begin{equation}\label{r 0}
4 C_p\sum_{m=1}^{\infty}\left(\al(r_0m)\right)^{1-2/p}\leq 1
\end{equation}
 where $C_p$ is the constant from Lemma \ref{L1}. Henceforth we will set $r=r_0$.
 
The second result we need is as follows.

\begin{lemma}\label{Lemma 2}
If the sets $\{M_j\}$ are constructed  with $r=r_0$ so that  \eqref{r 0} holds true,  then for every $k\in\bbN$  we have
\[
\frac12\sum_{i=1}^{k}\text{Var}(S(M_i))\leq \text{Var}(S(M_1\cup M_2\cup\cdots\cup M_{k}))\leq \frac{3}2\sum_{i=1}^{k}\text{Var}(S(M_i)).
\] 
\end{lemma}

\begin{proof}
First,
$$
 \text{Var}(S(M_1\cup M_2\cup\cdots\cup M_{k}))=\sum_{i=1}^{k}\|S(M_i)\|_{L^2}^2+2\sum_{1\leq i<j\leq k}\text{Cov}(S(M_i),S(M_j)).
$$
Next, set $\gamma(k)=\big(\al(k)\big)^{1-2/p}$.
Then by \eqref{p cov}, 
\begin{equation}\label{SimTo}
2\sum_{1\leq i<j\leq k}|\text{Cov}(S(M_i),S(M_j))|\leq 2C_p\sum_{1\leq i<j\leq k}\gamma(r(j-i))\|S(M_i)\|_{L^2}
\|S(M_j)\|_{L^2}
\end{equation}
$$\leq  C_p\sum_{1\leq i<j\leq k}\gamma(r(j-i))(\|S(M_i)\|_{L^2}^2+\|S(M_j)\|_{L^2}^2)
= C_p\sum_{j=2}^{k}\|S(M_j)\|_{L^2}^2\sum_{i=1}^{j-1}\gamma(r(j-i))+$$ 
$$C_p\sum_{i=1}^{k-1}\|S(M_i)\|_{L^2}^2\sum_{j=i+1}^{k}\gamma(r(j-i))\leq \left(2C_p\sum_{m\geq 1}\gamma(rm)\right)\sum_{j=1}^{k}\|S(M_j)\|_{L^2}^2.
$$
The proof is completed using that $2C_p\sum_{m\geq 1}\gamma(rm)\leq\frac12$.  
\end{proof}

Next, let $r_0$ satisfy \eqref{r 0} and  set
$Q_0=2C_pr_0d^2L^2\sum_{m\geq1}\left(\al(m)\right)^{1-2/p}+(r_0dL)^2$, where $d$ is the dimension of the random vectors $X_j$. For each $A$ set
$$
Q(A)=Q(A,r_0,p,L)=Q_0+2\sqrt{3AQ_0}.
$$
Then   $Q(A)/A\to 0$ as $A\to\infty$.  Let $A_0>1$ be so that for all $A\geq A_0$ we have  
$$
\sqrt{A}\geq 2r_0dL,\,\,  A\geq 4Q(A)\,\, \text{ and }\,\, (\sqrt A+L)^2\leq 2A.
$$  
Note that the second restriction on $A$ guarantees that  $A\leq \text{Var}(S(M_j))\leq 2A$ for each $j$.

The last auxiliary result we need before completing the proof of Proposition \ref{VarPrp} is as follows.

\begin{lemma}\label{Cor1}
Suppose that the sets $M_j$ are constructed with $r=r_0$ so that \eqref{r 0} holds true and with $A\geq A_0$. 
Fix some  $k\in\bbN$ and set $\Lambda_1=M_{1}\cup M_{2}\cup\cdots\cup M_{k}$ and $\Lambda_2=I_{1}\cup I_{2}\cup\cdots\cup I_{k}$. Then,
\begin{equation}\label{CorEq}
\left|\frac{\text{Var}(S(\Lambda_2))}{\text{Var}(S(\Lambda_1))}-1\right|\leq \frac{2Q(A)}{A}\leq \frac12.
\end{equation}

\end{lemma}
\begin{proof}
Let $X=S(\Lambda_1)$ and $Y=S(\Lambda_2)-X$. Then 
$$
\text{Var}(X+Y)=\text{Var}(X)+\text{Var}(Y)+2\text{Cov}(X,Y)
$$
and so by the Cauchy-Schwarz inequality, 
\begin{equation}\label{One}
\left|\text{Var}(X+Y)-\text{Var}(X)\right|\leq \text{Var}(Y)+2\left(\text{Var}(X)\text{Var}(Y)\right)^{1/2}.
\end{equation}
Now,by Lemma \ref{Lemma 2},
\begin{equation}\label{Two}
\frac{Ak}{2}\leq \frac12\sum_{j=1}^{k}\text{Var}(S(M_j))\leq \text{Var}(X)\leq \frac 32\sum_{j=1}^{k}\text{Var}(S(M_j))\leq 3Ak
\end{equation}
where we have used that $A\leq \text{Var}(S(M_j))\leq 2A$.
On the other hand, let $D_j=I_j\setminus M_j$. Then $Y=\sum_{j=1}^k S(D_j)$ and so
$$
\text{Var}(Y)=\text{Cov}(Y,Y)\leq\sum_{j=1}^{k}|\text{Cov}(S(D_j),Y)|.
$$
Now, fix some $j$ and write  $D_j=\{d_j+1,...,d_j+r-1\}$. Then 
$$
|\text{Cov}(S(D_j),Y)|\leq\sum_{m\leq d_j}|\text{Cov}(S(D_j),X_m)|+\sum_{m\geq d_j+r}|\text{Cov}(S(D_j),X_m)|+\text{Var}(S(D_j)).
$$
Next, by applying \cite[Corollary A.2]{Hall} and using \eqref{M p} we see that if $m\not\in D_j$ then
$$
|\text{Cov}(S(D_j),X_m)|\leq C_p\|S(D_j)\|_{L^p}\|X_m\|_{L^p}\left(\al(\rho_{m,j})\right)^{1-2/p},\,\,\,\rho_{m,j}=\min_{s\in D_j}|m-s|.
$$
Using also that $\|S(D_j)\|_{L^p}\leq rdL$ and $\|X_m\|_{L^p}\leq dL$ for every $p>1$ we see that 
$$
|\text{Cov}(S(D_j),Y)|\leq 2C_p(rdL)(dL)\sum_{m\geq1}\left(\al(m)\right)^{1-2/p}+(rdL)^2=Q_0.
$$
Thus, 
$$
\text{Var}(Y)\leq Q_0k.
$$
Finally, using \eqref{One} and \eqref{Two} we conclude that 
$$
\left|\text{Var}(X+Y)-\text{Var}(X)\right|\leq \left(Q_0+2\sqrt{3AQ_0}\right)k=Q(A)k.
$$
The proof is completed by
dividing the above left hand side by $\text{Var}(X)$ and using \eqref{Two}.
\end{proof}

\begin{proof}[Completion of the proof of Proposition \ref{VarPrp}]
Let us construct the blocks $\{I_j\}$ with  constants $A\geq A_0$ and $r=r_0$ with the same restrictions described before.
First, since $\sqrt{A}\geq 2r_0 dL$,  using the second property of $M_j$ and that $I_j\setminus M_j$ is of cardinality $r_0-1$  we obtain \eqref{A}  with the specific unit vector $u=u_0$ and the constants $A_1=\frac12\sqrt{A}$ and $A_2=\frac32\sqrt{A}$. By using Assumption \ref{AssVV}, we see that if $A$ is large enough then \eqref{A}  holds true all unit vectors $u$, possibly with different constants. The estimate \eqref{Block Cntrl} follows by taking the supremum over all unit vectors $u$ in the third inequality from the left in \eqref{A}. 
Next, by applying Lemmas \ref{Lemma 2} and \ref{Cor1}, we see that  \eqref{k n vn.1} holds true  with the specific unit vector $u=u_0$.
Thus, by Assumption \ref{AssVV}, if $A$ is large enough then  \eqref{k n vn.1} holds  for an arbitrary unit vector  (possibly with different constants).

In order to prove \eqref{Last}, let us assume \eqref{phi half}. 
For each $q\geq1$ 
set 
\[
\cD_q:=\max_{b_{q}<n\leq b_{q+1}}|S_n-S_{b_q}|=\max_{m\in I_{q+1}}\left|\sum_{j=a_{q+1}}^m X_j\right|
\]
where in the second inequality we used that $b_{q}+1=a_{q+1}$.
Then with $\Xi_j=\sum_{k\in I_j}X_k$ and  $k_n=\max\{k: b_k\leq n\}$ we have
\begin{equation}\label{Fin}
\left|S_n-\sum_{j=1}^{k_n}\Xi_j\right|\leq \cD_{k_n}.
\end{equation}
By applying \cite[Theorem 6.17]{PelBook} with the random variables $\{X_{n}: n\in I_{q+1}\}$ (which is possible due to \eqref{phi half})
 we see  that for every $p>2$ there are constants $c_{p}$ and $R_p$ so that for all  $q\in\bbN$ we have 
$$
\|\cD_q\|_{L^p}\leq R_p\left(\left\|\max\{|X_n|:n\in I_{q+1}\}\right\|_{L^p}+\max\{\|S_n-S_{b_q}\|_{L^2}: n\in I_{q+1}\}\right)
\leq c_p
$$
where in the second inequality we have used that $\sup_n(\text{ess-sup}|X_n|)<\infty$ and  \eqref{Block Cntrl}. 
Thus, by applying the Markov inequality we see that for every $\ve>0$ and $p>2$ we have
\[
P(|\cD_q|\geq q^\ve)=P(|\cD_q|^p\geq q^{\ve p})\leq c_p^pq^{-\ve p}.
\]
Taking $p>1/\ve$ we get from the Borel-Cantelli lemma that 
\begin{equation}\label{abo}
|\cD_q|=O(q^\ve),\,\text{a.s.}
\end{equation}
The desired estimate (\ref{Last}) follows  by plugging in $q=k_n$ in \eqref{abo}  and using (\ref{Fin}) and (\ref{k n vn.1}).
\end{proof}

\section{ASIP: proof Theorem \ref{Main Thm}}

The proof of Theorem \ref{Main Thm} is based  on an  application of  \cite[Theorem 2.1]{DH} with an arbitrary $p>4$. The latter theorem is a modification of  \cite[Theorem 1.3]{GO} suited for more general non-stationary sequences of random vectors. The standing assumption in both theorems can be described as follows.
Let $(A_1, A_2, \ldots )$ be an $\bbR^d$-valued process on some probability space $(\Omega, \mathcal F, \mathbb P)$. Then there exists $\varepsilon_0>0$ and $C,c>0$ such that for all $n,m\in \bbN$, $a_1<a_2< \ldots <a_{n+m+k}$, $k\in \bbN$ and $t_1,\ldots ,t_{n+m}\in\mathbb R^d$ with $|t_j|\leq\varepsilon_0$, we have that
\begin{eqnarray}\label{(H)}
\Big|\mathbb E\big(e^{i\sum_{j=1}^nt_j\cdot(\sum_{\ell=a_j}^{a_{j+1}-1}A_\ell)+i\sum_{j=n+1}^{n+m}t_j\cdot(\sum_{\ell=a_j+k}^{a_{j+1}+k-1}A_\ell)}\big)\\
-\mathbb E\big(e^{i\sum_{j=1}^nt_j\cdot(\sum_{\ell=a_j}^{a_{j+1}-1}A_\ell)}\big)\cdot\mathbb E\big(e^{i\sum_{j=n+1}^{n+m}t_j\cdot(\sum_{\ell=a_j+k}^{a_{j+1}+k-1}A_\ell)}\big)\Big|\nonumber\leq C(1+\max|a_{j+1}-a_j|)^{C(n+m)}e^{-ck}.\nonumber
\end{eqnarray}

The first part of the proof is to show that $A_j=\Xi_j=\sum_{k\in I_j}X_k$ satisfies \eqref{(H)}, which follows directly from the exponential $\al$-mixing rates \eqref{al mix}. Next, let us verify the rest of the conditions of  \cite[Theorem 2.1]{DH}. Set
$$\cA_n=\sum_{j=1}^n A_j.$$
Then, by applying \eqref{k n vn.1} with $b_{n}$ instead of $n$ we see that for all $n$ large enough we have
$$
\min_{|u|=1}\left(\text{Cov}(\cA_n)u\cdot u\right)\geq Cn
$$
where $C>0$ is a constant.  This shows that the first additional condition in \cite[Theorem 2.1]{DH} is satisfied. To show that $A_j$ are uniformly bounded in $L^p$, combining our assumption \eqref{phi half} with \cite[Theorem 6.17]{PelBook}  and taking into account  \eqref{Block Cntrl}, we see that for every $p>2$,
\begin{equation}\label{BP}
B_p:=\sup_j\|A_j\|_{L^p}<\infty.
\end{equation}
The last condition we need to verify is that
\begin{equation}\label{Need Last}
\left|\text{Cov}(A_n\cdot u, A_{n+k}\cdot u)\right|\leq C_0\eta^k
\end{equation}
for  some $C_0>0$, $\eta\in(0,1)$,  all $k,n\in\bbN$ and all unit vectors $u\in\bbR^d$. To establish that, let us fix some $p>2$. Then by \cite[Corollary A.2]{Hall} we have
$$
\left|\text{Cov}(A_n\cdot u, A_{n+k}\cdot u)\right|\leq \|A_n\cdot u\|_{L^p}\|A_{n+k}\cdot u\|_{L^p}\left(\al(k)\right)^{1-2/p}$$
and so by \eqref{al mix} and \eqref{BP}  we see that  \eqref{Need Last} holds true with $C_0=B_p^2C^{1-2/p}$ and $\eta=\del^{1-2/p}$ (where $C$ and $\del$ come from \eqref{al mix}).

Next, by applying \cite[Theorem 2.1]{DH} with the sequence $A_j=\Xi_j=\sum_{k\in I_j}X_k$ we conclude that 
 there is a coupling between the sequence $A_1,A_2,...$ and a sequence $Z_1,Z_2,...$ of independent centered Gaussian random vectors so that for every $\ve>0$,
\begin{equation}\label{asip.0}
\left|\sum_{i=1}^{k}A_i-\sum_{j=1}^k Z_j\right|=o(k^{\frac 14+\ve}),\,\,\text{a.s.}
\end{equation}
and  all the properties specified in Theorem \ref{Main Thm} hold true for the new sequence $A_j=\Xi_j$.
Now Theorem \ref{Main Thm} follows by plugging in $k=k_n$ in \eqref{asip.0}, using \eqref{k n vn.1}, and then  approximating $S_n$ by $\cA_{k_n}=\sum_{j=1}^{k_n}\Xi_j$, relying on  \eqref{Last} and using the, so-called, Berkes-Philipp lemma (which allows us to further couple $(X_j)$ with the Gaussian sequence).

\section{Verification of the additional conditions in the non-scalar case: Markov chains}\label{SecVV}
Assumption \ref{AssVV} trivially holds true for real-valued random variables $X_j$.
In this section we discuss natural sufficient conditions for Assumption \ref{AssVV} for certain additive functionals of contracting  Markov chains.

\subsection*{Dobrushin's contracting chains}
Let us recall the definition of Dobrushin's contraction coefficients $\pi(\cdot)$ (see \cite{Dub}). If $Q(x,\cdot)$ is a regular family of Markov transition operators between two spaces $\mathcal X$ and $\mathcal Y$,
then 
\[
\pi(Q)=\sup\{|Q(x_1,E)-Q(x_2,E)|:\,x_1,x_2\in\cX, E\in\cB(\cY)\}
\]
where $\cB(\mathcal Y)$ is the underlying $\sigma$-algebra on $\mathcal Y$.

Let  $\{\xi_j\}$ be a Markov chain with corresponding state spaces $\cX_j$. Let $Q_j(x,\Gamma)=\bbP(\xi_{j+1}\in\Gamma|\xi_j=x)$ and suppose that 
\begin{equation}\label{Cont}
\del:=\sup_j\pi(Q_j)<1.
\end{equation}
Then, as proven in \cite{VarSeth}, the chain $\{\xi_j\}$ is exponentially fast $\phi$-mixing.
Let us  take a sequence $f_j$ of bounded measurable functions on $\cX_j$ and set $X_j=f_j(\xi_j)-\bbE[f_j(\xi_j)]$.
Then by the results\footnote{In \cite{VarSeth} only the lower bound was derived, however in this setup the upper bound is easier to obtain.} in \cite{VarSeth} (see also \cite[Proposition 13]{Pel}), there are positive constants $A=A_\del$ and $B=B_\del$ so that for every $n,m$ with $n\leq m$ and each unit vector $u$,
$$
A\sum_{j=n}^m\text{Var}(X_j\cdot u)\leq \text{Var}(S_{n,m}\cdot u)\leq B\sum_{j=n}^m\text{Var}(X_j\cdot u).
$$
We thus get the following result.
\begin{proposition}\label{Dob}
Assumption \ref{AssVV} (and hence Theorem \ref{Main Thm}) holds true if $\del<1$ and there is a constant $C\geq 1$ so that for every $j\in\bbN$ we have
$$
\max_{|u|=1}(\text{Cov}(X_j)u\cdot u)\leq C\min_{|u|=1}(\text{Cov}(X_j) u\cdot u).
$$
\end{proposition}

\subsubsection{Uniformly elliptic chains}
In this section we consider a (somewhat) less general class of Markov chains $\{\xi_j\}$, but more general functionals. 
 Let $\{\xi_j\}$ be a Markov chain with transition densities
\[
\bbP(\xi_{j+1}\in\Gamma|\xi_{j}=x)=\int_{\Gamma}p_j(x,y)d\mu_{j+1}(y)
\] 
where $\mu_{j+1}$ is a measure on the state space $\mathcal X_{j+1}$ of $\xi_{j+1}$ and   $\Gamma\subset\mathcal X_{j+1}$ is a measurable set.
We assume that there exists $\varepsilon_0>0$ so that for any $i$ we have $\sup_{x,y}p_i(x,y)\leq 1/\varepsilon_0$, and the second step transition densities of $\xi_{i+2}$ given $\xi_i$ are bounded  below by $\varepsilon_0$ (this is the uniform ellipticity condition): 
\[
\inf_{i\geq1}\inf_{x,z}\int p_i(x,y)p_{i+1}(y,z)d\mu_{i+1}(y)\geq \varepsilon_0.
\]
Then the resulting Markov chain $\{\xi_j\}$ is exponentially fast $\phi$-mixing (see \cite[Proposition 1.22]{DS}). Note that if the first step transition densities $p_i$ were bounded below then we would get \eqref{Cont}, but the assumption about the second step transition densities does necessary yield \eqref{Cont}.

Next, we take  a uniformly bounded sequence of measurable functions $f_j:\mathcal X_j\times\mathcal X_{j+1}\to\mathbb R^d$ and set $X_j=f_j(\xi_j,\xi_{j+1})-\bbE[f_j(\xi_j,\xi_{j+1})]$. Let us fix some unit vector $u$.
Then, by applying
 \cite[Theorem 2.1]{DS}  with the real-valued functions $f_j\cdot u$ (which are uniformly bounded in both $j$ and $u$) we see that there are non-negative numbers $u_i(f;u)=u_i(f_{i-2}\cdot u,f_{i-1}\cdot u,f_i\cdot u)$ and constants $A,B,C,D>0$ which depend only on $\varepsilon_0$ and $K:=\sup_j\sup|f_j|$
 so that  for all $m,n$ with  $m-n\geq 3$ we have
\begin{equation}\label{Var2}
A\sum_{j=n+3}^{m} u_j^2(f;u)-B\leq\text{Var}(S_{n,m}\cdot u)\leq C\sum_{j=n+3}^{m} u_j^2(f;u)+D
\end{equation}
where  we recall that $S_{n,m}=\sum_{j=n}^{m}X_j$.
The numbers $u_i(f;u)$ are given in  \cite[Definition 1.14]{DS}: $u_i^2(f;u)=(u_i(f;u))^2$ is the variance of the balance (in the terminology of \cite{DS}) function $\Gamma_i=\Gamma_{i,f\cdot u}$ given by 
\begin{eqnarray*}
\Gam_i(x_{i-2},x_{i-1},x_i,y_{i-1},y_{i},y_{i+1})=f_{i-2}(x_{i-2},x_{i-1})\cdot u+f_{i-1}(x_{i-1},x_i)\cdot u+f_{i}(x_i,y_{i+1})\cdot u\\-f_{i-2}(x_{i-2},y_{i-1})\cdot u-f_{i-1}(y_{i-1},y_i)\cdot u-f_{i}(y_i,y_{i+1})\cdot u
\end{eqnarray*}
corresponding to the hexagon generated by $(x_{i-1},x_{i},x_{i+1};y_{i-1},y_{i},y_{i+1})$, with respect to the probability measure on the space of hexagons positioned at ``time" $i$, as  introduced in \cite[Section 1.3]{DS}.
We thus have the following result.
\begin{proposition}\label{UE}
Assumption \ref{AssVV} (and hence Theorem \ref{Main Thm}) holds true if  there is a constant $C\geq1$ so that for each $j$ the matrix $B_j$ defined by $(B_j)_{k,\ell}=\frac12\big(u_j^2(f,e_k)+u_j^2(f,e_\ell)\big)$ (where $e_m$ is the $m$-th standard unit vector), satisfies
$$
\max_{|u|=1}(B_j u\cdot u)\leq C\min_{|u|=1}(B_j u\cdot u).
$$
\end{proposition}

\subsection*{Weaker results for uniformly contracting Markov chains}
Let $\{\xi_j\}$ be a Markov chain. Let us consider the transition operators $Q_j$ given by $Q_jg(x)=\mathbb E[g(\xi_{j+1})|\xi_j=x]$.
For each $j\geq1$
let $\rho_j$ be the $L^2$-operator norm of the restriction of $Q_j$ to the space of zero-mean square-integrable functions $g(\xi_{i+1})$ (see \cite{Pel}). We assume here that  
\[
\rho:=\sup_{j}\rho_j<1.
\]
In these circumstances the Markov chain $\{\xi_j\}$ is exponentially fast $\rho$-mixing (see \cite{Pel}), and so by \cite[(1.22)]{BrMix} we get \eqref{al mix}.
Note also that by \cite[Lemma 4.1]{VarSeth} we have,
\[
\rho_j\leq\sqrt{\pi(Q_j)}
\]
and so this is a weaker assumption than \eqref{Cont}

Let $f_j:\mathcal X_j\to\bbR^d$ be a sequence of measurable uniformly bounded functions and set $X_j=f_j(\xi_j)$. We prove here the following result.

\begin{theorem}\label{Thm1}
Suppose that $s_n=\min_{|u|=1}(V_n u\cdot u)\geq c_0n^{\delta_0}$ for some constants $c_0,\delta_0>0$. Assume also that 
 there exists $C\geq 1$ so that for each $j$ we have
\begin{equation}\label{Ut}
\max_{|u|=1}(\text{Cov}(X_j)u\cdot u)\leq C\min_{|u|=1}(\text{Cov}(X_j) u\cdot u).
\end{equation}
Then
there is a coupling of $X_1,X_2,...$ with a sequence of independent centered Gaussian vectors $Z_1,Z_2,...$ with the properties described in Theorem \ref{Main Thm}.
\end{theorem} 
\begin{remark}
Relying on \eqref{Var1} below,
the condition $s_n\geq c_0 n^{\delta_0}$ is satisfied if $\sum_{j=1}^n c_j\geq c_0C_1^{-1}n^{\del_0}$ where
$c_j=\min_{|u|=1}(\text{Cov}(X_j)u\cdot u)=\min_{|u|=1}\text{Var}(X_j\cdot u)$.  
\end{remark}
\begin{proof}[Proof of Theorem \ref{Thm1}]
First,
by \cite[Proposition 13]{Pel}, there are constants $C_1,C_2>0$ so that for all $n,m$ with $n\leq m$ and every unit vector $u$ we have
\begin{equation}\label{Var1}
C_1\sum_{j=n}^{m}\text{Var}(X_j\cdot u)\leq\text{Var}(S_{n,m}\cdot u)\leq C_2\sum_{j=n}^{m}\text{Var}(X_j\cdot u)
\end{equation}
By using \eqref{Var1} and \eqref{Ut} we see that Assumption \ref{AssVV} is valid.
\vskip0.1cm

The proof of Theorem \ref{Thm1} proceeds now similarly to the proof of Theorem \ref{Main Thm}, with the following exception: we cannot use \cite[Theorem 6.17]{PelBook} in order to obtain \eqref{Fin}, since it requires \eqref{phi half}.
In order to overcome this difficulty,
consider first the scalar case $d=1$. Then, along the lines of the proof of \cite[Lemma 2.16]{DS}, it was  shown that for every exponentially fast $\rho$-mixing sequence $\{X_j\}$ which is uniformly bounded by some $K$, for all even $p\geq2$ there exist constants $E_{p,K}>0$ and $V_{p,K}>0$, depending only on $p$ and $K$, so that for all $n$ and $m$ with $n\leq m$ and $\sum_{j=n}^{m}\text{Var}(X_j)\geq V_{p,K}$, we have
\begin{equation}\label{Lp Bounds1}
\|S_{m,n}\|_{L^p}\leq E_{p,K}\Big(\sum_{j=n}^{m}\text{Var}(X_j)\Big)^{1/2}.
\end{equation}
Now, by (\ref{Var1}) we have that 
\[
\sum_{j=n}^{m}\text{Var}(f_j(X_j))\leq C_1^{-1}\text{Var}(S_{n,m})
\]
and so there are constants $R_p,U_p>0$ so that for all $n,m$ with $\|S_{m,n}\|_2\geq U_p$ we have
\begin{equation}\label{Est}
\|S_{n,m}\|_{L^p}\leq R_p\|S_{n,m}\|_{L^2}.
\end{equation}
By replacing $X_j$ with $X_j\cdot u$ for an arbitrary unit vector $u$ and then taking the supremum over $u$, we see that \eqref{Est} holds true also in the vector-valued case (i.e. when $d>1$).

Finally, let us obtain \eqref{Fin}. Set $\mathcal B_n=\sum_{j=1}^{k_n}\Xi_j$. Then by the Markov inequality for every $\ve>0$ and $q>1$ we have
$$
\bbP(|S_n-\mathcal B_n|\geq n^{\ve})=\bbP(|S_n-\mathcal B_n|^q\geq n^{\ve q})\leq n^{-\ve q}\|S_n-\mathcal B_n\|_{L^q}^q\leq R_{q,K}(1+c)n^{-\ve q}
$$
where in the last inequality we have also used \eqref{Est} and that $\|S_n-\cB_n\|_{L^2}\leq c$ is bounded in $n$.
Taking $q>1/\ve$ and applying the Borel-Cantelli lemma we get that 
$$
|S_n-\mathcal B_n|=o(n^\ve)=o(s_n^{\frac{\varepsilon}{
\delta_0}}),\,\text{a.s.}
$$
Since $\varepsilon$ is arbitrary small we get  that  for every $\varepsilon>0$ we have
$$
|S_n-\cB_n|=o(s_n^\varepsilon),\,\,\text{a.s.}
$$
Now the proof of  Theorem \ref{Thm1} is completed similarly to the end of the proof of Theorem \ref{Main Thm}.
\end{proof}

\begin{acknowledgment}
The original rates obtained in previous versions of this paper were  $o(n^\del)+o(V_n^{1/4+\del})$, for any $\del>0$. 
I would like to thank D. Dolgopyat for several discussions which helped improving these rates to the current rates $o(V_n^{1/4+\del})$ in Theorem \ref{Main Thm}.
\end{acknowledgment}

\end{document}